\newcommand{\R}{\mathbb{R}}
\newcommand{\Z}{\mathbb{Z}}
\newcommand{\ud}{\mathrm{d}}
\newcommand{\F}{\mathscr{F}}
\newcommand{\D}{\mathscr{D}}
\newcommand{\E}{\mathbb{E}}
\newcommand{\de}{\Delta}
\newcommand{\G}{\mathscr{G}}
\theoremstyle{plain}
\newtheorem{theorem}{Theorem}
\theoremstyle{definition}
\newtheorem{definition}[theorem]{Definition}
\theoremstyle{remark}
\newtheorem{remark}{Remark}
\numberwithin{equation}{section}
\numberwithin{theorem}{section}
\author{Emil Vuorinen}
\title{$L^{p}(\mu) \to L^{q}(\nu)$ characterization for well localized operators}
\address{DEPARTMENT OF MATHEMATICS AND STATISTICS, P.O.B 68 (GUSTAF H\"ALLSTR\"OMIN KATU 2B), FI-00014 UNIVERSITY OF HELSINKI, FINLAND}
\email{emil.vuorinen@helsinki.fi}
\begin{document}

\begin{abstract}
We consider a two weight $L^{p}(\mu) \to L^{q}(\nu)$-inequality for well localized operators as defined and studied by F. Nazarov, S. Treil and A. Volberg \cite{NTV} when $p=q=2$. A counterexample of F. Nazarov shows that the direct analogue of the results in \cite{NTV} fails for $p=q\not=2$. Here a new square function testing condition is introduced and applied to characterize the two weight norm inequality. The use of the square function testing condition is also demonstrated in connection with certain positive dyadic operators.
\end{abstract}

\subjclass[2010]{Primary 42B20}

\keywords{well localized operator, two weight inequality, testing condition}

\maketitle

\section{Introduction}

The main purpose of this paper is to consider two weight norm inequalities for ``well localized operators'' (see definition \ref{well loc}). In \cite{NTV} F. Nazarov, S. Treil and A. Volberg proved that Sawyer type testing conditions are necessary and sufficient for a well localized operator $T$ to be bounded from $L^{2}(\mu)$ into $ L^{2}(\nu)$, where $\mu$ and $\nu$ are two arbitrary  Radon measures on $\R^{n}$. This means that to deduce boundedness of the operator $T$ it suffices to test $T$ and its formal adjoint with one indicator of a (dyadic) cube at a time. Here we investigate the same well localized operators but with general exponents  $1<p<\infty$ defining the $L^{p}$-spaces. As an example of the applicability of the two weight theorem for well localized operators it was shown in \cite{NTV} that two weight inequalities for \emph{Haar multipliers} and \emph{Haar shifts} can be seen as two weight inequalities for well localized operators.

There exists a manuscript by F. Nazarov \cite{F} showing that there are situations where the Sawyer type testing conditions do not work in $L^{p}$ when $ p \not=2$. He fixes an exponent $ 1<p<\infty, \ p \not=2$, and provides an example of a certain operator related to Haar multipliers for which the Sawyer type testing conditions with the exponent $p$ do not imply the corresponding quantitative two weight estimate. Also, in this example the Sawyer type testing  would be enough in the case $p=2$. A quantitative consequence of this counterexample related to Haar multipliers is explained in Section \ref{Well localized operators}.

But, if we look at the Sawyer type testing a little differently, we see that there is another way to generalize it to other exponents $1<p< \infty$. Namely, we consider a kind of \emph{square function testing condition}, whose motivation comes from  $\mathcal{R}$-bounded operator families as used for instance in \cite{W}.   An operator family on $L^{2}$-spaces is $\mathcal{R}$-bounded if and only if it is uniformly bounded, but for other exponents $1<p<\infty$ $\mathcal{R}$-boundedness is in general a stronger property.  In the same spirit our square function testing condition is equivalent with the Sawyer type testing in the case $p=2$, but for other exponents $1<p<\infty$ it can be a stronger requirement.

The initial idea was to try if this kind of testing is necessary and sufficient  for a well localized operator $T$ to bounded from $L^{p}(\mu)$ into $L^{p}(\nu)$ for any exponent $1 < p < \infty$, which indeed is the case. But it was observed that another property of this square function testing is that it gives with exactly the same proof also a characterization for $T$ to be bounded from $L^{p}(\mu)$ into $L^{q}(\nu)$ for any exponents $1<p,q<\infty$. 

To see what kind of theorem we are talking about we formulate a simplified qualitative version of the main Theorem \ref{well loc}. For the exact definition of the operator we refer to Section \ref{Well localized operators}.

\begin{theorem}\label{simplified}
Assume we have two exponents $1<p,q<\infty$ and two Radon measures $\mu$ and $\nu$ on $\R^{n}$. Let $T^{\mu}$ be a well localized operator with respect to a dyadic lattice $\D$ in $\R^{n}$, and suppose $T^{\nu}$ is a formal adjoint of $T^{\mu}$. Then the operator $T^{\mu}$ extends to a bounded operator $T^{\mu}:L^{p}(\mu) \to L^{q}(\nu)$ if and only if there exist two non-negative constants $\mathcal{T}$ and $\mathcal{T}^{*}$, so that for every finite subcollection $\D_{0} \subset \D$ and every set of non-negative real numbers $\{a_{Q}\}_{Q \in \D_{0}}$
the inequalities 

\begin{equation}\label{global}
\Big\| \Big( \sum_{Q \in \D_{0}} ( T^{\mu}a_{Q}1_{Q})^{2} \Big)^{\frac{1}{2}} \Big \|_{L^{q}(\nu)} 
\leq
\mathcal{T} \Big\| \Big( \sum_{Q \in \D_{0}} (a_{Q}1_{Q})^{2} \Big)^{\frac{1}{2}} \Big\|_{L^{p}(\mu)}
\end{equation}
and 
\begin{equation}\label{global dual}
\Big\| \Big( \sum_{Q \in \D_{0}} ( T^{\nu}a_{Q}1_{Q})^{2} \Big)^{\frac{1}{2}} \Big\|_{L^{p'}(\mu)} 
\leq
\mathcal{T}^{*} \Big\| \Big( \sum_{Q \in \D_{0}} (a_{Q}1_{Q})^{2} \Big)^{\frac{1}{2}} \Big\|_{L^{q'}(\nu)}
\end{equation}
hold.
 
\end{theorem}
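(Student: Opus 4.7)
\emph{Necessity.} To derive (\ref{global}) from the boundedness $T^\mu \colon L^p(\mu) \to L^q(\nu)$, I would randomize. Fix a finite $\D_0 \subset \D$ and non-negative $\{a_Q\}_{Q \in \D_0}$; on an auxiliary probability space $(\Omega,\mathbb{P})$, let $\{r_Q\}_{Q \in \D_0}$ be independent Rademacher variables and set $F_\omega := \sum_{Q \in \D_0} r_Q(\omega)\, a_Q 1_Q$. Applying $\|T^\mu F_\omega\|_{L^q(\nu)} \le \|T^\mu\|_{p \to q}\|F_\omega\|_{L^p(\mu)}$ for each fixed $\omega$, taking $L^r(\Omega)$-norms with $r := \min(p,q)$, and commuting the $L^r(\Omega)$-norm inside via Minkowski's integral inequality (permissible because $r \le q$ on the left and $r \le p$ on the right), Khintchine's inequality applied pointwise in $x$ converts both sides into the desired square functions and yields $\mathcal{T} \lesssim_{p,q} \|T^\mu\|_{p \to q}$. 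The dual estimate (\ref{global dual}) follows by the same argument applied to $T^\nu \colon L^{q'}(\nu) \to L^{p'}(\mu)$.

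\emph{Sufficiency.} By duality it suffices to bound the bilinear form $\langle T^\mu f, g\rangle_\nu$ for $f \in L^p(\mu)$ and $g \in L^{q'}(\nu)$. Expand $f = \sum_Q \Delta_Q^\mu f$ and $g = \sum_R \Delta_R^\nu g$ in the Haar systems adapted to $\mu$ and $\nu$. Well-localization (see Section \ref{Well localized operators}) forces $\langle T^\mu \Delta_Q^\mu f, \Delta_R^\nu g\rangle_\nu = 0$ unless $(Q,R)$ lies in a bounded range of relative scale and position, so the double sum collapses to finitely many single-cube-indexed sums of the form
\[
\sum_Q \langle T^\mu(a_Q 1_Q), \psi_Q\rangle_\nu \qquad \text{and, symmetrically,} \qquad \sum_R \langle \varphi_R, T^\nu(b_R 1_R)\rangle_\mu,
\]
where the scalar $a_Q$ is determined by the Haar data of $f$ attached to $Q$ (typically the constant value of a martingale difference $\Delta_{P(Q)}^\mu f$ on $Q$) and $\psi_Q$ is a uniformly bounded combination of Haar increments $\Delta_R^\nu g$ for $R$ in the finite neighborhood of $Q$ permitted by well-localization; the symmetric setup is analogous with $\mu$ and $\nu$ swapped.

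For each such piece, pointwise Cauchy-Schwarz in the index $Q$ followed by Hölder in $L^q(\nu) \times L^{q'}(\nu)$ gives
\[
\Big|\sum_Q \langle T^\mu(a_Q 1_Q), \psi_Q\rangle_\nu\Big| \le \Big\|\Big(\sum_Q |T^\mu(a_Q 1_Q)|^2\Big)^{1/2}\Big\|_{L^q(\nu)} \Big\|\Big(\sum_Q |\psi_Q|^2\Big)^{1/2}\Big\|_{L^{q'}(\nu)}.
\]
The first factor is dominated by $\mathcal{T}\,\|(\sum_Q a_Q^2 1_Q)^{1/2}\|_{L^p(\mu)}$ via (\ref{global}), and this in turn is $\lesssim \|f\|_{L^p(\mu)}$ by the $L^p(\mu)$-Littlewood-Paley equivalence for the Haar system (the square function of $\sum_Q a_Q 1_Q$ coincides pointwise, up to indexing, with the Haar square function of $f$). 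The second factor is $\lesssim \|g\|_{L^{q'}(\nu)}$ by the analogous Littlewood-Paley equivalence on the $\nu$-side. Symmetric pieces are handled in the same way with (\ref{global dual}).

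\emph{Main obstacle.} The delicate point is the bookkeeping that organizes the Haar expansion into the single-cube sums above with the coefficients in the form $a_Q 1_Q$ demanded by (\ref{global}) — that is, genuine scalars $a_Q$ multiplying indicators. The three structural cases $R \subsetneq Q$, $Q \subsetneq R$, and $\ell(Q) \sim \ell(R)$ with $Q,R$ close need separate handling: the two nested cases are absorbed into $T^\mu$- and $T^\nu$-pieces respectively, while the diagonal case requires a bounded-overlap argument estimating the contribution directly by $\|f\|_{L^p(\mu)}\|g\|_{L^{q'}(\nu)}$. Verifying that the atoms produced in each case indeed give square functions controlled by the Littlewood-Paley norms of $f$ and $g$ is the combinatorial core of the proof.
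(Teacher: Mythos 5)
Your necessity argument is correct and is essentially the Marcinkiewicz--Zygmund argument that the paper packages as Theorem \ref{vector extension}; there is no gap there.

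The sufficiency argument has a genuine gap in the paraproduct (nested) case, which is precisely where the paper has to work hardest. When $R \subsetneq Q$ with $\ell(R) < 2^{-r}\ell(Q)$, well-localization lets one replace $T^\mu \de^\mu_Q \tilde f$ by $\langle \de^\mu_Q \tilde f\rangle^\mu_{Q_R}\, T^\mu 1_{R^{(r)}}$, and after telescoping the sum over $Q \supsetneq R^{(r)}$ the coefficient attached to $R^{(r)}$ is the \emph{average} $\langle \tilde f\rangle^\mu_{R^{(r)}}$, not a martingale-difference value. The square function $\big(\sum_R |\langle \tilde f\rangle^\mu_{R^{(r)}}|^2\, 1_{R^{(r)}}\big)^{1/2}$ over all dyadic $R$ is \emph{not} controlled by the Haar--Littlewood--Paley norm of $f$; in fact for $f = 1_{[0,1)}$ and Lebesgue measure the sum $\sum_{Q \subset [0,1)} \langle f\rangle_Q^2\, 1_Q$ is identically $+\infty$ on $[0,1)$. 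So your claim that ``the square function of $\sum_Q a_Q 1_Q$ coincides pointwise, up to indexing, with the Haar square function of $f$'' fails exactly for this piece, and the direct Cauchy--Schwarz plus (\ref{global}) plus Littlewood--Paley scheme does not close.

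What the paper does instead is introduce stopping (principal) cubes $\F$ for $\tilde f$, which form a sparse/Carleson family, and then uses two further devices: (i) constants $c_R \in [-2,2]$ with $c_R\langle \tilde f\rangle^\mu_R = \langle|\tilde f|\rangle^\mu_{\pi_\F R}$ so that all the terms with $\pi_\F R = F$ can be collapsed against a single test function $T^\mu 1_F$, and (ii) the Carleson embedding theorem (Theorem \ref{Carleson}) to bound $\big\|\sum_{F\in\F}\langle|\tilde f|\rangle^\mu_F 1_F\big\|_{L^p(\mu)} \lesssim \|f\|_{L^p(\mu)}$. After this reduction the square-function testing condition (\ref{global}) is applied to the family $\{1_F\}_{F\in\F}$ with weights $\langle|\tilde f|\rangle^\mu_F$, and Kahane--Khinchine together with the martingale square-function equivalence (\ref{mart. norm}) handles the $g$-side. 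Without the stopping-cube reduction there is no way to pass from the average-square-function to $\|f\|_{L^p(\mu)}$, so this is not merely ``bookkeeping'' as your sketch suggests but a structural ingredient your proposal is missing.

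A smaller point: for the diagonal piece $\ell(R)\sim\ell(Q)$ with $R$ near $Q$, you suggest a bounded-overlap argument ``estimating the contribution directly.'' In the paper this piece is also handled by the testing condition (\ref{direct}), applied after decomposing $\de^\mu_Q \tilde f$ into its values on the children of $Q$; there is no way to avoid the testing hypothesis here.
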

We will also demonstrate the use of our testing condition with positive dyadic operators, and we will get again an $L^{p}(\mu) \to L^{q}(\mu)$ characterization for any exponents $1<p,q<\infty$. Previously there has been two different characterizations depending on the relative order of the exponents $p$ and $q$, see \cite{LSU-T} (or \cite{H} for a different proof technique) and \cite{T}. Here we get one characterization for all cases. This also provides another example of a situation where the square function testing is sufficient but the Sawyer type testing is not.

Even though we have a different kind of testing condition, the proofs will follow the existing outlines. With the positive dyadic operators we follow the technique in \cite{H}, and our study of the well localized operators is structured as in \cite{NTV}.

\subsection*{Acknowledgements}
I am a member of the Finnish Centre of Excellence in Analysis and Dynamics Research. This work is part of my PhD project under supervision of T. P. Hyt\"onen, and I am very grateful for all the key ideas and discussions related to this problem. Also, I would like to thank the referee for comments that made the proof of Theorem \ref{well loc} shorter and clearer.
 
\section{Set up and preliminaries}\label{Set up and preliminaries}

We begin by recalling a general theorem due to  Marcinkiewicz and Zygmund \cite{MZ} (Theorem \ref{vector extension})  showing that bounded linear operators on $L^{p}$- spaces have extensions to a certain vector valued situation. This theorem will also show that the square function testing condition follows from boundedness of the corresponding operator.

So fix a positive integer $n$ and suppose $\mu$ and $\nu$ are two Radon measures on $\R^{n}$. We consider these fixed for the rest of the paper. We shall give the definitions below with the measure $\mu$, but they are defined similarly with any Radon measure. 

Let $(\varepsilon_{i})_{i=1}^{\infty}$ a sequence of independent random signs on some probablity space $(\Omega, \mathbb{P})$. This means that the sequence is independent and $\mathbb{P}(\varepsilon_{i}=1)=\mathbb{P}(\varepsilon_{i}=-1)=1/2$ for all $i$.  We will use the Kahane-Khinchine inequality \cite{K} saying that for any Banach space $X$ and two exponents $1 \leq p,q< \infty$ there exists a constant $C>0$, depending only on $p$ and $q$, so that for any  $x_{1}, \dots, x_{N} \in X$ 

\begin{equation}\label{KK}
C^{-1}\Big(\E \| \sum_{i=1}^{N} \varepsilon_{i}x_{i} \|_{X}^{q} \Big)^{\frac{1}{q}}  
\leq \Big(\E \| \sum_{i=1}^{N} \varepsilon_{i}x_{i} \|_{X}^{p} \Big)^{\frac{1}{p}} \leq C \Big(\E \| \sum_{i=1}^{N} \varepsilon_{i}x_{i} \|_{X}^{q} \Big)^{\frac{1}{q}},
\end{equation}
where $\E$ refers to the expectation with respect to the random signs. The Kahane-Khinchine inequalities will be used when $X=\R$ or when $X$ is some $L^{p}$-space, and we note here that the constant $C$ in (\ref{KK}) in the case of $L^{p}$-spaces does not depend on the underlying measure. 

Two sided estimates like (\ref{KK}) will be abbreviated as 

\begin{equation*}
\Big(\E \| \sum_{i=1}^{N} \varepsilon_{i}x_{i} \|_{X}^{q}\Big)^{\frac{1}{q}}  
\simeq_{p,q} \Big(\E \| \sum_{i=1}^{N} \varepsilon_{i}x_{i} \|_{X}^{p} \Big)^{\frac{1}{p}},
\end{equation*}
where possible subscripts (in this case $p,q$) refer to the information that the implicit constant $C$ depends on. A similar one sided estimate will be abbreviated as	``$\lesssim$'' or ``$\gtrsim$''.  The implicit constants will never depend on any relevant information in the situation, and no confusion should arise.

For simplicity all our scalar valued functions will be real (or $[- \infty, \infty]$) valued. For any  exponent $1\leq p < \infty$ we denote by $L^{p}(\mu)$ the usual $L^{p}$-space on $\R^{n}$ with respect to the measure $\mu$, and by $L^{p}(\mu,l^{2})$
the space of sequences $(f_{i})_{i=1}^{\infty}$ of $\mu$-measurable real valued functions defined on $\R^{n}$ for which the norm
\begin{equation*}
\| (f_{i})_{i=1}^{\infty} \|_{L^{p}(\mu,l^{2})}:=\Big( \int \Big( \sum_{i=1}^{\infty} | f_{i} |^{2}\Big)^{\frac{p}{2}} \ud \mu \Big)^{\frac{1}{p}}
\end{equation*} 
is finite. 

\begin{theorem}\label{vector extension}
Let $1 \leq p,q<\infty$ be two exponents and assume that $T:L^{p}(\mu) \to L^{q}(\nu)$ is a bounded linear operator. Then the operator
\begin{equation*}
(f_{i})_{i=1}^{\infty} \mapsto  \tilde{T}(f_{i})_{i=1}^{\infty}:= (Tf_{i})_{i=1}^{\infty}
\end{equation*}
is also a bounded linear operator from $L^{p}(\mu,l^{2})$ into $L^{q}(\nu,l^{2})$, with operator norm satisfying 
\begin{equation*}
\| T \|_{L^{p}(\mu) \to L^{q}(\nu)} \simeq_{p,q} \| \tilde{T} \|_{L^{p}(\mu, l^{2}) \to L^{q}(\nu, l^{2})}.
\end{equation*}
\end{theorem}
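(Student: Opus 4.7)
The plan is to prove this via a standard randomization argument using Kahane--Khinchine, applied first in the scalar setting pointwise and then on $L^p(\mu)$ as a Banach space. The lower bound $\|T\|_{L^p(\mu)\to L^q(\nu)} \le \|\tilde T\|_{L^p(\mu,l^2)\to L^q(\nu,l^2)}$ is immediate from the inclusion $f\mapsto (f,0,0,\dots)$, so the work is in the reverse inequality. It is enough to treat finite sequences $(f_i)_{i=1}^{N}$ and then extend by monotone convergence in $N$.

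For the main estimate I would start from the norm $\|\tilde T(f_i)\|_{L^q(\nu,l^2)}^{q}=\int (\sum_{i}|Tf_i|^2)^{q/2}\,\mathrm d\nu$ and rewrite the $l^2$-sum using the scalar Khinchine inequality pointwise in $\nu$: $(\sum_{i}|Tf_i(x)|^{2})^{q/2}\simeq_q \E|\sum_i \varepsilon_i Tf_i(x)|^{q}$. Integrating against $\nu$ and using Fubini together with the linearity of $T$ gives
\begin{equation*}
\Big\|\tilde T(f_i)\Big\|_{L^q(\nu,l^2)}^{q}\simeq_{q}\E\Big\|T\Big(\sum_{i}\varepsilon_i f_i\Big)\Big\|_{L^q(\nu)}^{q}\le \|T\|^{q}\,\E\Big\|\sum_{i}\varepsilon_i f_i\Big\|_{L^p(\mu)}^{q}.
\end{equation*}

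Now I would invoke the Banach-valued Kahane--Khinchine inequality (\ref{KK}) with $X=L^p(\mu)$ to change the outer exponent from $q$ to $p$:
\begin{equation*}
\Big(\E\Big\|\sum_{i}\varepsilon_i f_i\Big\|_{L^p(\mu)}^{q}\Big)^{1/q}\simeq_{p,q}\Big(\E\Big\|\sum_{i}\varepsilon_i f_i\Big\|_{L^p(\mu)}^{p}\Big)^{1/p}.
\end{equation*}
Once the outer exponent matches the inner $L^p$-integration, Fubini allows me to pass the expectation inside and apply the scalar Khinchine inequality once more in the reverse direction, obtaining
\begin{equation*}
\E\int\Big|\sum_{i}\varepsilon_i f_i\Big|^{p}\mathrm d\mu\simeq_{p}\int\Big(\sum_{i}|f_i|^2\Big)^{p/2}\mathrm d\mu=\|(f_i)\|_{L^p(\mu,l^2)}^{p}.
\end{equation*}
Chaining the three equivalences yields $\|\tilde T(f_i)\|_{L^q(\nu,l^2)}\lesssim_{p,q}\|T\|\cdot \|(f_i)\|_{L^p(\mu,l^2)}$, which is the desired bound.

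There is no serious obstacle; the only point requiring some care is the interplay between the two different exponents $p$ and $q$, which is precisely what the Banach-valued Kahane--Khinchine step handles, allowing us to turn an $L^q$-average of random signs into an $L^p$-average before reintroducing the $l^2$-structure on the $\mu$-side. The fact that the scalar Khinchine constants in the Kahane--Khinchine inequality on $L^p$-spaces are independent of the underlying measure (as noted in the paper just after (\ref{KK})) is what keeps the implicit constant depending only on $p$ and $q$.
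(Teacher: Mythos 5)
Your argument is correct and is essentially the same randomization proof as in the paper: scalar Khinchine pointwise on the $\nu$-side, boundedness of $T$ inside the expectation, Banach-valued Kahane--Khinchine to adjust the outer moment exponent, and scalar Khinchine back on the $\mu$-side. The only cosmetic difference is that you move the outer exponent directly from $q$ to $p$, whereas the paper passes through the first moment ($q \to 1 \to p$), so the paper counts four applications of Kahane--Khinchine where you use three; the substance is identical.
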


\begin{proof}
We recall a short proof for the reader's convenience. It suffices to consider an arbitrary sequence $(f_{i})_{i=1}^{\infty}$ of $L^{p}(\mu)$-functions such that $f_{i}\not=0$ only for finitely many indices $i$. Let $(\varepsilon_{i})_{i=1}^{\infty}$ be an independent sequence of random signs.  

Using the Kahane-Khinchine  inequality (four times) and the linearity of $T$ we get
\begin{equation*}
\begin{split}
&\Big( \int \big( \sum_{i=1}^{\infty} |T f_{i} |^{2}\big)^{\frac{q}{2}} \ud \nu \Big)^{\frac{1}{q}} 
= \Big( \int \big( \E| \sum_{i=1}^{\infty} \varepsilon_{i} Tf_{i} |^{2}\big)^{\frac{q}{2}} \ud \nu \Big)^{\frac{1}{q}} \\
& \simeq_{q} \Big( \E \int  | \sum_{i=1}^{\infty} \varepsilon_{i} Tf_{i} |^{q} \ud \nu \Big)^{\frac{1}{q}} 
\simeq_{q} \E \Big(  \int  | T\sum_{i=1}^{\infty} \varepsilon_{i} f_{i} |^{q} \ud \nu \Big)^{\frac{1}{q}} \\
& \leq \| T \|_{L^{p}(\mu) \to L^{q}(\nu)} \E \Big(  \int  | \sum_{i=1}^{\infty} \varepsilon_{i} f_{i} |^{p} \ud \mu \Big)^{\frac{1}{p}}
\simeq_{p}\| T \|_{L^{p}(\mu) \to L^{q}(\nu)} \Big( \int \big( \sum_{i=1}^{\infty} | f_{i} |^{2}\big)^{\frac{p}{2}} \ud \mu \Big)^{\frac{1}{p}},
\end{split}
\end{equation*}
where in the last step we used the Kahane-Khinchine inequality twice.

\end{proof}

Let $\D$ be a dyadic lattice in $\R^{n}$. More specifically, for each $k \in \Z$, let $\D_{k}$ consist of disjoint cubes of the form $x+[0, 2^{-k})^{n}$, $x \in \R^{n}$, that cover $\R^{n}$. It is required that for every $k \in \Z$ and $Q \in \D_{k}$  the cube $Q$ is a union of $2^{n}$ cubes $Q' \in \D_{k+1}$.  Then define $\D:= \cup_{k \in \Z}\D_{k}$.  The side length $2^{-k}$ of a cube  $Q \in \D_{k}$ is  written as $l(Q)$. We will fix one lattice $\D$.

For a cube $Q \in \D_{k}$ define $Q^{(1)}$ to be the unique cube in $\D_{k-1}$ that contains $Q$, and for $2 \leq r \in \Z$ define inductively $Q^{(r)}:= (Q^{(r-1)})^{(1)}$. Also, for any positive integer $r$, let $ch^{(r)}(Q)$ consist of those cubes $Q'$ in $\D$ that satisfy $Q'^{(r)}=Q$, and for $r=1$ write just $ch(Q):=ch^{(1)}(Q)$. We talk about $ch(Q)$ as the children of the cube $Q$.

Let $f$ be a function in $L^{p}(\mu), 1 < p< \infty$. For any cube $Q \in \D$ denote the average of $f$ over $Q$ by
\begin{equation*}
\langle f \rangle^{\mu}_{Q}:= \frac{1}{\mu(Q)} \int_{Q} f \ud \mu, 
\end{equation*}
and define the differences
\begin{equation*}
\de^{\mu}_{Q} f:= \sum_{Q' \in ch(Q)}  \langle f \rangle^{\mu}_{Q'} 1_{Q'}- \langle f \rangle^{\mu}_{Q} 1_{Q}.
\end{equation*}
We shall use the martingale difference decomposition 
\begin{equation*}
f = \sum_{Q \in \D_{k}} \langle f \rangle^{\mu}_{Q} 1_{Q}
+\sum_{\begin{substack}{Q \in \D \\ l(Q) \leq 2^{-k}}\end{substack}} \de^{\mu}_{Q} f,
\end{equation*}
where $k \in \Z$ is any integer. 

For any cube $Q \in \D$ with at least two children that have non-zero $\mu$-measure, let $h^{\mu}_{Q, k}, k \in \{1, \dots, m(Q)\}$, be a collection of Haar functions on $Q$, where $m(Q)+1$ is the number of children of $Q$ that have non-zero measure. The Haar functions are required to form an orthonormal basis for the space
\begin{equation}\label{space}
\{f:Q \to \R: f\text{ is constant on the children of $Q$ and } \int f \ud \mu=0\}
\end{equation}
equipped with the $L^{2}(\mu)$-norm. Below we shall sometimes just write $h^{\mu}_{Q}$ for a generic Haar function related to a cube $Q\in\D$. 

With the Haar functions the differences $\de^{\mu}_{Q}f$ may be written as
\begin{equation*}
\de^{\mu}_{Q}f= \sum_{k=1}^{m(Q)} \langle f, h^{\mu}_{Q,k} \rangle_{\mu} h^{\mu}_{Q,k},
\end{equation*}
where
\begin{equation*}
\langle f,g\rangle_{\mu}:= \int f g \ud \mu
\end{equation*}
for $g \in L^{p'}(\mu)$, and $p'$ is the dual exponent to $p$, i.e., $\frac{1}{p} + \frac{1}{p'} =1$. Indeed, if $Q$ has at most one child with non-zero measure, then $\de^{\mu}_{Q}f=0$. Otherwise, the requirement that Haar functions are constant in the children of $Q$ and have zero average, and the fact that every function in the space (\ref{space}) can be represented with the basis, give
\begin{equation*}
\begin{split}
&\sum_{Q' \in ch(Q)}  \langle f \rangle^{\mu}_{Q'} 1_{Q'}- \langle f \rangle^{\mu}_{Q} 1_{Q}
= \sum_{k=1}^{m(Q)} \Big\langle \sum_{Q' \in ch(Q)} \langle f \rangle^{\mu}_{Q'} 1_{Q'}- \langle f \rangle^{\mu}_{Q} 1_{Q}, h^{\mu}_{Q,k} \Big\rangle_{\mu} h^{\mu}_{Q,k} \\
&=\sum_{k=1}^{m(Q)} \Big\langle \sum_{Q' \in ch(Q)} \langle f \rangle^{\mu}_{Q'} 1_{Q'}, h^{\mu}_{Q,k} \Big\rangle_{\mu} h^{\mu}_{Q,k}
=\sum_{k=1}^{m(Q)} \langle f, h^{\mu}_{Q,k} \rangle_{\mu} h^{\mu}_{Q,k}.
\end{split}
\end{equation*}

The norm of $f$ may be estimated with the martingale difference decomposition as
\begin{equation}\label{mart. norm}
\| f \|_{L^{p}(\mu)}  
\simeq_{p} \Big\| \big( \sum_{Q \in \D_{k}} | \langle f \rangle^{\mu}_{Q} |^{2} 1_{Q}
+\sum_{\begin{substack}{Q \in \D: \\ l(Q) \leq 2^{-k}}\end{substack}} | \de^{\mu}_{Q} f |^{2}\big)^{\frac{1}{2}} \Big\| _{L^{p}(\mu)},
\end{equation}
where again $k \in \Z$ is arbitrary. We emphasize that 
\begin{equation}\label{only for p=2}
\| f \|_{L^{2}(\mu)} =  \Big(\sum_{Q \in \D_{k}} \| \langle f \rangle^{\mu}_{Q} 1_{Q}\|_{L^{2}(\mu)}^{2}
+\sum_{\begin{substack}{Q \in \D \\ l(Q) \leq 2^{-k}}\end{substack}} \| \de^{\mu}_{Q} f \|_{L^{2}(\mu)}^{2}\Big)^{\frac{1}{2}}
\end{equation}
holds only for $p=2$, and in general if one replaces all the numbers $2$ in (\ref{only for p=2}) with an arbitrary $1<p<\infty$, one gets (\ref{only for p=2}) with  ``$\lesssim$'' if $1<p\leq2$ and with ``$\gtrsim$'' if $2 \leq p < \infty$. 

\subsection{Principal cubes and Carleson embedding theorem}\label{Principal and Carleson}

We shall also use the usual principal cubes and a form of the dyadic \emph{Carleson embedding theorem}. To construct the principal cubes suppose $f \in L^{1}_{loc}(\mu)$ and  $\D_{0} \subset \D$ is a subcollection such that each $Q' \in \D_{0}$ is contained in some maximal cube $Q\in \D_{0}$. Maximality of a cube here means that it is not contained in any strictly bigger cube in the same collection.   

Let $\F_{0}$ be the set of maximal cubes in $\D_{0}$. Assume that $\F_{0}, \dots, \F_{k}$ are defined for some non-negative integer $k$. Then, for $Q \in \F_{k}$,  let $ch_{\F}(Q)$ consist of the maximal cubes $Q' \in \D_{0}$ such that $Q' \subset Q$ and
\begin{equation*}
\langle |f| \rangle_{Q'}^{\mu} > 2 \langle |f| \rangle_{Q}^{\mu}.
\end{equation*}
Set $\F_{k+1}:= \cup_{Q \in \F_{k}} ch_{\F}(Q)$ and 
\begin{equation*}
\F:= \bigcup_{k=0}^{\infty}\F_{k}.
\end{equation*}

For any cube $Q \in \D_{0}$ denote by $\pi_{\F}Q$ the smallest cube in $\F$ that contains $Q$, and by $\pi^{1}_{\F}Q$ the smallest cube (if it exists)  in $\F $ that strictly contains it. 

The collection $\F$ is $\frac{1}{2}$-sparse, that is, there exist pairwise disjoint subsets $E(F) \subset F, F \in \F,$ such that $\mu(E(F)) \geq \frac{1}{2}\mu(F)$. Indeed, one can define $E(F):= F \setminus \cup_{F' \in ch_{\F}(F)}F'$, and the construction of $\F$ implies that $\mu(E(F)) \geq \frac{1}{2}\mu(F)$. The property that $\F$ is $\frac{1}{2}$-sparse implies also that $\F$ is $\emph{2-Carleson}$, i.e., for every $F \in \F$ 
\begin{equation*}
\sum_{\begin{substack}{F' \in \F : \\ F' \subset F}\end{substack}} \mu(F') \leq 2 \mu(F).
\end{equation*}

The well known Carleson embedding theorem says that if $\{a_{Q}\}_{Q\in \D}$ is a collection of non-negative real numbers, then the estimate
\begin{equation*}
\sum_{Q \in \D} |\langle f \rangle_{Q}^{\mu} |^{p}a_{Q} \leq C \| f \|_{L^{p}(\mu)}^{p}
\end{equation*}
holds for all $f \in L^{p}(\mu)$, where $C$ is a fixed constant and $p \in (1,\infty)$, if and only if  there exists $C'>0$ so that
\begin{equation*}
\sum_{\begin{substack}{Q' \in \D: \\
Q' \subset Q}\end{substack}} a_{Q'} \leq C' \mu(Q)
\end{equation*}
for all $Q \in \D$.

The version of the theorem we shall use is the following:

\begin{theorem}\label{Carleson}
Suppose $\D_{0} \subset \D$ is a subcollection and $1 < p < \infty$. Then we have the estimate 
\begin{equation}\label{embedding}
\Big\| \sum_{Q \in \D_{0}} \langle | f |\rangle^{\mu}_{Q}1_{Q}\Big\|_{L^{p}(\mu)}
\leq C \|f \|_{L^{p}(\mu)}
\end{equation}
for all $f \in L^{p}(\mu)$, where $C$ is independent of $f$, if and only if there exists $C'>0$ such that for all $Q \in \D_{0}$

\begin{equation}\label{assumption}
\sum_{\begin{substack}{Q' \in \D_{0}: \\ Q' \subset Q}\end{substack}} \mu(Q') \leq C' \mu(Q).
\end{equation}

Moreover, the smallest possible constants $C$ and $C'$ satisfy $C' \leq C^p \lesssim_p C'^{p}$.
\end{theorem}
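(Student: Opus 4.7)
The plan is to reduce both directions to the scalar dyadic Carleson embedding theorem recalled just before the statement. For the necessity direction I would fix $Q\in\D_{0}$ and test (\ref{embedding}) with $f=1_{Q}$. Since $\langle|1_{Q}|\rangle^{\mu}_{Q'}=1$ for every $Q'\in\D_{0}$ with $Q'\subset Q$, the auxiliary function $S(x):=\sum_{Q'\in\D_{0},\,Q'\subset Q}1_{Q'}(x)$ is pointwise dominated by $\sum_{Q'\in\D_{0}}\langle|1_{Q}|\rangle^{\mu}_{Q'}1_{Q'}$. Since $Q$ itself lies in $\D_{0}$, we have $S\geq 1$ on $Q$, whence $S^{p}\geq S$ on $Q$. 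Integrating,
\begin{equation*}
\sum_{Q'\in\D_{0},\,Q'\subset Q}\mu(Q')=\int_{Q} S\,\ud\mu\leq \int S^{p}\,\ud\mu\leq C^{p}\|1_{Q}\|_{L^{p}(\mu)}^{p}=C^{p}\mu(Q),
\end{equation*}
which proves $C'\leq C^{p}$.

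For sufficiency, I would first extend the hypothesis to the full lattice by defining $a_{Q}:=\mu(Q)$ if $Q\in\D_{0}$ and $a_{Q}:=0$ otherwise. For any $Q\in\D$ the maximal cubes of $\D_{0}$ contained in $Q$ are pairwise disjoint subsets of $Q$ by dyadic nestedness, and summing (\ref{assumption}) over them gives $\sum_{Q'\in\D,\,Q'\subset Q}a_{Q'}\leq C'\mu(Q)$. The scalar Carleson embedding recalled in the text then yields, for every $r\in(1,\infty)$,
\begin{equation*}
\sum_{Q\in\D_{0}}|\langle g\rangle^{\mu}_{Q}|^{r}\mu(Q)\lesssim_{r} C'\,\|g\|_{L^{r}(\mu)}^{r},\qquad g\in L^{r}(\mu).
\end{equation*}
Assuming $f\geq 0$ without loss of generality, the duality $L^{p}(\mu)=L^{p'}(\mu)^{*}$ together with $\int h\cdot 1_{Q}\,\ud\mu=\langle h\rangle^{\mu}_{Q}\mu(Q)$ gives
\begin{equation*}
\Big\|\sum_{Q\in\D_{0}}\langle f\rangle^{\mu}_{Q} 1_{Q}\Big\|_{L^{p}(\mu)}=\sup_{\substack{h\geq 0\\ \|h\|_{L^{p'}(\mu)}\leq 1}}\sum_{Q\in\D_{0}}\langle f\rangle^{\mu}_{Q}\langle h\rangle^{\mu}_{Q}\mu(Q).
\end{equation*}
H\"older's inequality in the sum over $Q$ with exponents $p,p'$ and weights $\mu(Q)$, followed by the preceding embedding applied to $f$ at exponent $p$ and to $h$ at exponent $p'$, bounds the right hand side by a constant depending only on $p$ times $C'\|f\|_{L^{p}(\mu)}\|h\|_{L^{p'}(\mu)}$, and taking the supremum yields $C\lesssim_{p} C'$, i.e.\ $C^{p}\lesssim_{p} C'^{p}$. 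I do not foresee a real obstacle; the only small bookkeeping step is the extension of the Carleson condition from $\D_{0}$ to all of $\D$, which is where the dyadic disjointness of the maximal $\D_{0}$-cubes inside a general $Q\in\D$ is used.
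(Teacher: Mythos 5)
Your proof is correct and follows essentially the same route as the paper's: the necessity direction tests \eqref{embedding} on $f=1_{Q}$ (your $S\geq 1$ on $Q$ is an equivalent way to get the paper's $\sum a_{i}^{p}\leq(\sum a_{i})^{p}$ inequality), and the sufficiency direction is the same duality plus H\"older-in-the-sum plus scalar Carleson embedding argument. The one place you go slightly beyond the paper is the extension of the Carleson packing condition from $\D_{0}$ to all of $\D$ via disjointness of the maximal $\D_{0}$-cubes inside a general $Q\in\D$; the paper implicitly relies on this when it invokes ``the usual formulation of the Carleson embedding theorem,'' so your observation is a correct and welcome filling-in of that step rather than a departure.
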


\begin{proof}
Assume (\ref{embedding}) holds. Then for any $Q \in \D_{0}$ we have
\begin{equation*}
\sum_{\begin{substack}{Q' \in \D_{0}: \\ Q' \subset Q}\end{substack}} \mu(Q')
= \int \sum_{\begin{substack}{Q' \in \D_{0}: \\ Q' \subset Q}\end{substack}} (\langle 1_{Q}\rangle^{\mu}_{Q'}1_{Q'})^{p} \ud \mu
\leq \int (\sum_{\begin{substack}{Q' \in \D_{0}: \\ Q' \subset Q}\end{substack}} \langle 1_{Q}\rangle^{\mu}_{Q'}1_{Q'})^{p} \ud \mu \leq C^p \mu(Q).
\end{equation*}

On the other hand assume that (\ref{assumption}) holds and $f \in L^{p}(\mu)$. If $g \in L^{p'}(\mu)$ is any function, then
\begin{equation*}
\begin{split}
&\int (\sum_{Q \in \D_{0}} \langle | f |\rangle^{\mu}_{Q}1_{Q}) g \ud \mu
= \sum_{Q \in \D_{0}} \langle | f |\rangle^{\mu}_{Q} \langle  g \rangle^{\mu}_{Q} \mu(Q) \\
& \leq \Big( \sum_{Q \in \D_{0}} (\langle | f |\rangle^{\mu}_{Q})^{p}\mu(Q) \Big)^{\frac{1}{p}} 
\Big( \sum_{Q \in \D_{0}} (\langle | g| \rangle^{\mu}_{Q})^{p'}\mu(Q) \Big)^{\frac{1}{p'}}
\lesssim_{p}C'\|f \|_{L^{p}(\mu)} \|g \|_{L^{p'}(\mu)},
\end{split}
\end{equation*}
where the last step follows from the usual formulation of the Carleson embedding theorem.

\end{proof}

\section{Positive dyadic operators}

Before going to work with the well localized operators we introduce and illustrate the square function testing condition with a simpler positive dyadic operator. Fix two exponents $1 < p,q < \infty$. Let $\{\lambda_{Q}\}_{Q \in \D}$ be a set of non-negative real numbers. Define for non-negative Borel measurable functions a mapping   
\begin{equation}\label{def of pos}
f \mapsto T^{\mu}f:=\sum_{Q \in \D} \lambda_{Q} \int_{Q} f \ud \mu1_{Q}.
\end{equation}
We want to investigate when we have an estimate
\begin{equation}\label{estimate pos}
\|T^{\mu}f \|_{L^{q}(\nu)} \leq C \|f\|_{L^{p}(\mu)}
\end{equation}
for all $0 \leq f \in L^{p}(\mu)$, where of course $C$ should not depend on $f$. Similarly define for  $0 \leq f$
\begin{equation*}
f \mapsto T^{\nu}f:=\sum_{Q \in \D} \lambda_{Q} \int_{Q} f \ud \nu1_{Q},
\end{equation*}
and for every cube $Q \in \D$ also the localized versions
\begin{equation*}
T_{Q}^{\mu}f:=\sum_{\begin{substack}{Q' \in \D: \\ Q' \subset Q}\end{substack}} \lambda_{Q'} \int_{Q'} f \ud \mu1_{Q'}
\end{equation*}
and correspondingly $T_{Q}^{\nu}$.

\begin{theorem}\label{Pos oper}
The estimate (\ref{estimate pos}) holds if and only if there exist two constants $0\leq C_{1}, C_{2} < \infty$ so that for every 
finite 2-Carleson family $\D_{0} \subset \D$ and every set $\{a_{Q}\}_{Q \in \D_{0}}$ of positive real numbers the inequalities
\begin{equation}\label{direct pos}
\Big\| \Big( \sum_{Q \in \D_{0}} (a_{Q}T^{\mu}_{Q}1_{Q})^{2}\Big)^{\frac{1}{2}} \Big\|_{L^{q}(\nu)}
\leq C_{1}  \Big\| \Big( \sum_{Q \in \D_{0}} (a_{Q}1_{Q})^{2}\Big)^{\frac{1}{2}} \Big\|_{L^{p}(\mu)}
\end{equation}
and
\begin{equation}\label{dual pos}
\Big\| \Big( \sum_{Q \in \D_{0}} (a_{Q}T^{\nu}_{Q}1_{Q})^{2}\Big)^{\frac{1}{2}} \Big\|_{L^{p'}(\nu)}
\leq C_{2} \Big\| \Big( \sum_{Q \in \D_{0}} (a_{Q}1_{Q})^{2}\Big)^{\frac{1}{2}} \Big\|_{L^{q'}(\mu)}
\end{equation}
hold.

If $\mathcal{T}$ and $\mathcal{T}^{*}$ denote the smallest possible constants $C_{1}$ and $C_{2}$, respectively, then the smallest possible constant $\|T\|$ in (\ref{estimate pos}) satisfies $\|T\| \simeq \mathcal{T} + \mathcal{T}^{*}$.

\end{theorem}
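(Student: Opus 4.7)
For necessity, Theorem \ref{vector extension} provides the key tool. Since $T^{\mu}$ is positive, the pointwise inequality $a_{Q} T^{\mu}_{Q} 1_{Q} \leq T^{\mu}(a_{Q} 1_{Q})$ holds (the right side contains all the non-negative summands of the left plus more), so
\begin{equation*}
\Big\| \Big(\sum_{Q\in\D_{0}}(a_{Q}T^{\mu}_{Q}1_{Q})^{2}\Big)^{1/2}\Big\|_{L^{q}(\nu)}
\leq \Big\| \Big(\sum_{Q\in\D_{0}}(T^{\mu}(a_{Q}1_{Q}))^{2}\Big)^{1/2}\Big\|_{L^{q}(\nu)}
\lesssim \|T^{\mu}\|\,\Big\| \Big(\sum_{Q\in\D_{0}}(a_{Q}1_{Q})^{2}\Big)^{1/2}\Big\|_{L^{p}(\mu)},
\end{equation*}
where the last step is the Marcinkiewicz--Zygmund extension applied to the bounded operator $T^{\mu}:L^{p}(\mu)\to L^{q}(\nu)$. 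This gives $\mathcal{T}\lesssim\|T^{\mu}\|$, and the dual estimate (\ref{dual pos}) follows by the same argument for the formal adjoint $T^{\nu}:L^{q'}(\nu)\to L^{p'}(\mu)$.

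For sufficiency, by duality, monotone convergence and standard reductions it suffices to bound
\begin{equation*}
B(f,g) := \sum_{Q}\lambda_{Q}\langle f\rangle^{\mu}_{Q}\langle g\rangle^{\nu}_{Q}\mu(Q)\nu(Q) \lesssim (\mathcal{T}+\mathcal{T}^{*})\|f\|_{L^{p}(\mu)}\|g\|_{L^{q'}(\nu)}
\end{equation*}
for nonnegative $f,g$ with only finitely many nonzero $\lambda_{Q}$. Following the outline of \cite{H}, I construct principal cubes $\F$ for $f$ with respect to $\mu$ and $\Gg$ for $g$ with respect to $\nu$; both collections are $2$-Carleson (by Section \ref{Principal and Carleson}) and satisfy the stopping estimates $\langle f\rangle^{\mu}_{Q}\leq 2\langle f\rangle^{\mu}_{\pi_{\F}Q}$ and $\langle g\rangle^{\nu}_{Q}\leq 2\langle g\rangle^{\nu}_{\pi_{\Gg}Q}$. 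Substituting these averages and splitting by whether $\pi_{\F}Q\subseteq\pi_{\Gg}Q$ or $\pi_{\Gg}Q\subsetneq\pi_{\F}Q$ gives $B(f,g)\leq 4(I_{1}+I_{2})$. Grouping $I_{1}$ by $F=\pi_{\F}Q$ and using $\sum_{\pi_{\F}Q=F}\lambda_{Q}\mu(Q)1_{Q}\leq T_{F}^{\mu}1_{F}$ yields
\begin{equation*}
I_{1} \leq \sum_{F\in\F}\langle f\rangle^{\mu}_{F} \int T_{F}^{\mu}1_{F}\cdot g\,d\nu,
\end{equation*}
and $I_{2}$ is treated symmetrically with the roles of $(\mu,p,f,\F)$ and $(\nu,q',g,\Gg)$ interchanged and $T^{\mu}$ replaced by $T^{\nu}$.

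The main obstacle, I expect, is converting the single-index sum $\sum_{F}\langle f\rangle^{\mu}_{F}\int T_{F}^{\mu}1_{F}\cdot g\,d\nu$ into a Cauchy--Schwarz product of two $\ell^{2}$-square-function norms so that the direct testing (\ref{direct pos}) can actually be brought to bear: one factor should match the LHS of (\ref{direct pos}) and produce the constant $\mathcal{T}$, while the other should be absorbed into $\|g\|_{L^{q'}(\nu)}$. The natural strategy is a secondary $\Gg$-decomposition of the integral $\int T_{F}^{\mu}1_{F}\cdot g\,d\nu$ according to the $\Gg$-principal parents of the contributing cubes, followed by a joint Cauchy--Schwarz in the pair $(F,G)\in\F\times\Gg$. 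With the $\ell^{2}$ structure in place, applying (\ref{direct pos}) to $\D_{0}=\F$ with $a_{F}=\langle f\rangle^{\mu}_{F}$ yields a factor of $\mathcal{T}$ together with $\|(\sum_{F}(\langle f\rangle^{\mu}_{F}1_{F})^{2})^{1/2}\|_{L^{p}(\mu)}$, and this last square-function is controlled by $\|f\|_{L^{p}(\mu)}$ via the geometric decay of principal-cube averages along chains in $\F$ (so the $\ell^{2}$-sum reduces pointwise to the dyadic maximal function $M^{\mu}f$). The $\Gg$-side is handled by the $2$-Carleson property of $\Gg$ and Theorem \ref{Carleson}, producing the factor $\|g\|_{L^{q'}(\nu)}$. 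Combining with the symmetric estimate of $I_{2}$ gives $\|T^{\mu}\|\lesssim\mathcal{T}+\mathcal{T}^{*}$, completing the proof.
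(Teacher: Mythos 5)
Your necessity argument is correct and matches the paper: the pointwise bound $a_Q T^\mu_Q 1_Q \le T^\mu(a_Q 1_Q)$ together with the Marcinkiewicz--Zygmund Theorem \ref{vector extension} gives \eqref{direct pos}, and the dual bound is symmetric. Your bound $\big\|\big(\sum_F(\langle f\rangle^\mu_F 1_F)^2\big)^{1/2}\big\|_{L^p(\mu)}\lesssim\|f\|_{L^p(\mu)}$ via the pointwise domination by $M^\mu f$ (geometric growth of averages along $\F$-chains) is also a valid alternative to the paper's use of $\ell^1\hookrightarrow\ell^2$ plus Carleson embedding.

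The sufficiency argument, however, has a genuine gap at exactly the step you flag, and the gap is caused by a structural choice you make earlier: you group each of $I_1, I_2$ by the \emph{smaller} of the two stopping cubes. In your $I_1$, where $F=\pi_\F Q\subseteq G=\pi_\G Q$, you sum over $F$ and arrive at $I_1\lesssim\sum_F\langle f\rangle^\mu_F\int(T^\mu_F 1_F)\,g\,d\nu$. Once you are at this point there is no workable Cauchy--Schwarz: pairing $a_F=\langle f\rangle^\mu_F\,T^\mu_F 1_F$ with $b_F$ forces $b_F=g$ (or at best $g\,1_F$), and then $\sum_F b_F^2$ is not summable because a single point lies in unboundedly many $F\in\F$. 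Replacing $g$ by the truncation $g_{\pi_\G F}$ does not fix this either: many $F$ share the same $\G$-parent $G$, so $\sum_F g_{\pi_\G F}^2$ again contains an uncontrolled multiplicity. The ``secondary $\G$-decomposition'' you propose is not worked out, and with your grouping it does not produce an $\ell^2(\F)$-sequence $(b_F)_F$ with $\|\big(\sum_F b_F^2\big)^{1/2}\|_{L^{q'}(\nu)}\lesssim\|g\|_{L^{q'}(\nu)}$.

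The device that closes this gap in the paper's proof is to group by the \emph{larger} stopping cube and to perform the $\G$-truncation of $g$ \emph{before} the Cauchy--Schwarz, as an exact identity rather than an estimate. Concretely, for the sum over $(F,G)$ with $G\subset F$, one groups by $F$ and sets $ch^*_\F(F):=\{F'\in ch_\F(F):\pi_\G F'\subset F\}$ and
\begin{equation*}
g_F := 1_{E(F)}\,g + \sum_{F'\in ch^*_\F(F)}\langle g\rangle^\nu_{F'}1_{F'}.
\end{equation*}
For any $Q$ with $\pi_\F Q=F$ and $\pi_\G Q=G\subset F$, every $\F$-child $F'$ of $F$ that lies inside $Q$ satisfies $\pi_\G F'\subset\pi_\G Q=G\subset F$, so $F'\in ch^*_\F(F)$ and therefore $\int_Q g\,d\nu=\int_Q g_F\,d\nu$. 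This identity is what makes $F$ the larger cube essential: if instead $F\subset G$, a child $F'\subset Q\subset F$ may have $\pi_\G F'$ as large as $G\supsetneq F$, and the identity fails. With the truncation in place one gets
\begin{equation*}
\sum_F\langle f\rangle^\mu_F\int(T^\mu_F 1_F)\,g_F\,d\nu
\le\Big\|\Big(\sum_F\big(\langle f\rangle^\mu_F T^\mu_F 1_F\big)^2\Big)^{1/2}\Big\|_{L^q(\nu)}
\Big\|\Big(\sum_F g_F^2\Big)^{1/2}\Big\|_{L^{q'}(\nu)},
\end{equation*}
the first factor is controlled by \eqref{direct pos} applied to $\D_0=\F$ and then Carleson embedding, and the second factor is controlled because $\sum_F g_F\le g+4\sum_{G\in\G}\langle g\rangle^\nu_G 1_G$ (using the stopping estimate for $g$ and the disjointness of the sets $E(F)$), which Carleson embedding bounds by $\|g\|_{L^{q'}(\nu)}$. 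The other half, where $F\subset G$, is then handled symmetrically with the roles of $(f,\mu,\F,T^\mu,p)$ and $(g,\nu,\G,T^\nu,q')$ exchanged, i.e.\ one groups by $G$ (again the larger cube) and invokes \eqref{dual pos}. So the repair to your proposal is not merely to spell out the secondary decomposition you mention, but to reverse the grouping so that it is by the larger of the two parallel stopping cubes; only then does the exact truncation identity exist, and it is that identity — not an after-the-fact estimate — which produces an $F$-dependent function $g_F$ suitable for Cauchy--Schwarz.
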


This problem and the related results, as well as the whole ``testing philosophy'', has its roots in the work of E. Sawyer \cite{S1}, \cite{S2} in the 80's. A characterization for the inequality (\ref{estimate pos}) was first given by F. Nazarov, S. Treil and A. Volberg \cite{NTV1} in the case $p=q=2$ using the Bellman function method. The case $1<p\leq q <\infty$ was characterized by M. Lacey, E. Sawyer and I. Uriarte-Tuero in \cite{LSU-T}. Finally, H. Tanaka \cite{T} gave a characterization when the exponents are in the order $1<q<p<\infty$.  

Let us discuss here briefly the relation between the conditions (\ref{direct pos}) and (\ref{dual pos}) and the Sawyer type testing. The Sawyer type testing corresponds to the case when there is only one term in the sums in (\ref{direct pos}) and (\ref{dual pos}), that is the operator and its formal adjoint would be tested with one indicator of a dyadic cube at a time. Hence it is clear that the square function testing condition implies the Sawyer type testing condition.

On the other hand, when $p=q=2$, the left hand side of (\ref{direct pos}) can be written as
\begin{equation*}
\Big\| \Big( \sum_{Q \in \D_{0}} (a_{Q}T^{\mu}_{Q}1_{Q})^{2}\Big)^{\frac{1}{2}} \Big\|_{L^{2}(\nu)}
=\Big( \sum_{Q \in \D_{0}} \|a_{Q}T^{\mu}_{Q}1_{Q}\|_{L^{2}(\nu)}^{2}\Big)^{\frac{1}{2}},
\end{equation*}
and a similar computation on the right hand side of (\ref{direct pos}) shows that in this case the Sawyer type testing would imply the square function testing.

Equation (\ref{direct pos}) could be written with the Kahane-Khinchine inequalities
as 
\begin{equation*}
\E \Big\|\sum_{Q \in \D_{0}} \varepsilon_{Q}a_{Q}T^{\mu}_{Q}1_{Q} \Big\|_{L^{q}(\nu)} 
\leq C_{1}'\E \Big\|\sum_{Q \in \D_{0}} \varepsilon_{Q}a_{Q}1_{Q} \Big\|_{L^{p}(\mu)},
\end{equation*}
where the constants $C'_{1}$ and $C_{1}$ are comparable depending only on $p$ and $q$. This formulation explains how the square function testing is in the spirit of $\mathcal{R}$-bounded operator families, as mentioned in the introduction.

The Sawyer type testing is in general sufficient for (\ref{estimate pos}) if and only if the exponents are in the order $1<p\leq q\leq\infty$, see \cite{HHL}. Thus, in this situation our result is worse than the existing one. In the case $1<q<p<\infty$ H. Tanaka \cite{T} has given a characterization in terms of discrete Wolff's potentials, and here our result can be seen as an alternative way. We note that in our method the relative position of the exponents $p$ and $q$ does not make any difference. 

\begin{proof}[ Proof of theorem \ref{Pos oper}]
Our proof will follow the technique of  ``parallel stopping cubes'' as in \cite{H}. This method was first introduced in \cite{LSSU-T} (only in the older arXiv versions) and was used in the investigations of the two weight inequality for the Hilbert transform. 

If (\ref{estimate pos}) holds, then the sum (\ref{def of pos}) defining $T^{\mu}$ actually defines a bounded linear operator from $L^{p}(\mu)$ into $L^{q}(\nu)$. Clearly $T^{\mu}_{Q}1_{Q} \leq T^{\mu}1_{Q}$ for every $Q \in \D$, and the same is true for $T^{\nu}$ also. Hence in this situation we may apply Theorem \ref{vector extension} to show that (\ref{direct pos}) and (\ref{dual pos}) hold.

Assume then that (\ref{direct pos}) and (\ref{dual pos}) are true, and let $0 \leq f \in L^{p}(\mu)$ and $ 0\leq g \in L^{q'}(\nu)$ be two functions. For the estimate (\ref{estimate pos}) it is enough to choose a finite subcollection $\D_{0} \subset \D$ and show that
\begin{equation}\label{beginning}
\sum_{Q\in \D_{0}}\lambda_{Q}\int_{Q}f \ud \mu \int_{Q}g \ud \nu \lesssim (C_{1}+C_{2}) \|f\|_{L^{p}(\mu)}\|g\|_{L^{q'}(\nu)}.
\end{equation}

Since $\D_{0}$ is finite, we can construct the collections $\F$ and $\G$ of principal cubes for the function $f$ and $g$, respectively, where $\F$ is constructed with respect to the measure $\mu$ and $\G$ with respect to $\nu$. If $Q \in \D_{0}$ the notation $\pi Q=(F,G)$ means that $\pi_{\F}Q=F$ and $\pi_{\G}Q=G$. 

For every cube $Q \in \D_{0}$ there is a unique pair $(F,G) \in \F \times \G$ so that $\pi Q=(F,G)$, and the properties of dyadic cubes imply that $F \subset G$ or $G\subset F$. Hence, the sum in (\ref{beginning}) may be divided as 
\begin{equation}\label{division pos}
\sum_{Q \in \D_{0}}
\leq \sum_{F \in \F} \sum_{\begin{substack}{G \in \G: \\ G \subset F}\end{substack}}
\sum_{\begin{substack}{Q \in \D_{0}:\\ \pi Q=(F,G)}\end{substack}} 
+\sum_{G \in \G} \sum_{\begin{substack}{F \in \F :\\ F \subset G}\end{substack}}
\sum_{\begin{substack}{Q \in \D_{0}:\\ \pi Q=(F,G)}\end{substack}},
\end{equation}
where ``$\leq$'' is needed since we have double-counted the terms corresponding to all $Q$ for which $\pi_{\F}Q= \pi_{\G}Q$.  The two sums in (\ref{division pos}) are treated in the same way by symmetry, and we focus on the first one. 

So let $\F \ni F \supset G \in \G$ and suppose $Q \in \D_{0}$ is such that $\pi Q=(F,G)$. Write $ch_{\F}^{*}(F)$ for the collection of all $F' \in ch_{\F}(F)$ such that $\pi_{\G}F'\subset F$. Then, by the construction of $\F$, we have 
$ \langle f \rangle_{Q}^{\mu} \leq 2 \langle f \rangle_{F}^{\mu}$, and 
\begin{equation*}
\int_{Q} g \ud \nu= \int_{Q} g_{F} \ud \nu,
\end{equation*}
where 
\begin{equation*}
g_{F}:= 1_{E(F)} g + \sum_{F' \in ch_{\F}^{*}(F)}\langle g \rangle_{F'}^{\nu}.
\end{equation*}

Using these observations we get
\begin{equation*}
\begin{split}
&\sum_{F \in \F} \sum_{\begin{substack}{G \in \G: \\ G \subset F}\end{substack}}
\sum_{\begin{substack}{Q \in \D_{0}:\\ \pi(Q)=(F,G)}\end{substack}} 
\lambda_{Q}\int_{Q}f \ud \mu \int_{Q}g \ud \nu 
 \lesssim  \sum_{F \in \F} \langle f \rangle_{F}^{\mu} \sum_{\begin{substack}{Q \in \D: \\ Q \subset F}\end{substack}}\lambda_{Q}\int_{Q}1 \ud \mu \int_{Q}g_{F} \ud \nu \\
&= \int \sum_{F \in \F} \big(\langle f \rangle_{F}^{\mu} T^{\mu}_{F}1_{F}\big) g_{F} \ud \nu 
\leq \Big\| \Big(\ \sum_{F \in \F}\big(\langle f \rangle_{F}^{\mu} T^{\mu}_{F}1_{F}\big)^{2}\Big)^{\frac{1}{2}} \Big\|_{L^{q}(\nu)}
\Big\| \Big(\sum_{F \in \F} (g_{F})^{2}\Big)^{\frac{1}{2}} \Big \|_{L^{q'}(\nu)} \\
&\leq C_{1} \Big\| \Big(\ \sum_{F \in \F}\big(\langle f \rangle_{F}^{\mu}1_{F}\big)^{2}\Big)^{\frac{1}{2}}\Big\|_{L^{p}(\mu)}
\Big\| \Big(\sum_{F \in \F} (g_{F})^{2}\Big)^{\frac{1}{2}} \Big \|_{L^{q'}(\nu)}.
\end{split}
\end{equation*}

The Carleson embedding theorem \ref{Carleson} implies that 
\begin{equation*}
\Big\| \Big(\ \sum_{F \in \F}\big(\langle f \rangle_{F}^{\mu}1_{F}\big)^{2}\Big)^{\frac{1}{2}}\Big\|_{L^{p}(\mu)}
\leq \Big\| \ \sum_{F \in \F}\langle f \rangle_{F}^{\mu}1_{F} \Big\|_{L^{p}(\mu)} \lesssim \| f \|_{L^{p}(\mu)}.
\end{equation*}
Considering the second factor we have 
\begin{equation*}
\begin{split}
&\sum_{F \in \F} g_{F}= \sum_{F \in \F} (1_{E(F)}g +\sum_{F' \in ch_{\F}^{*}(F)}\langle g \rangle_{F'}^{\nu}1_{F'})
 \leq  g+ \sum_{F \in \F} \sum_{\begin{substack}{G \in \G: \\ \pi_{\F} G = F \text{ or} \\ G \in ch_{\F}(F) }\end{substack}} \sum_{\begin{substack}{F' \in ch_{\F}(F): \\ \pi_{\G} F'=G}\end{substack}} \langle g \rangle_{F'}^{\nu} 1_{F'} \\
& \leq g+ 2\sum_{F \in \F} \sum_{\begin{substack}{G \in \G: \\ \pi_{\F} G = F \text{ or} \\ G \in ch_{\F}(F)}\end{substack}} \langle g \rangle_{G}^{\nu} \sum_{\begin{substack}{F' \in ch_{\F}(F): \\ \pi_{\G} F'=G}\end{substack}} 1_{F'}
\leq g+ 2\sum_{F \in \F} \sum_{\begin{substack}{G \in \G: \\ \pi_{\F} G = F \text{ or} \\ G \in ch_{\F}(F)}\end{substack}} \langle g \rangle_{G}^{\nu}1_{G} \\
&\leq g+ 4\sum_{G \in \G}  \langle g \rangle_{G}^{\nu}1_{G}.
\end{split}
\end{equation*} 
Hence Theorem \ref{Carleson} implies again that
\begin{equation*}
\Big\| \Big(\sum_{F \in \F} (g_{F})^{2}\Big)^{\frac{1}{2}} \Big \|_{L^{q'}(\nu)}
\leq \Big\| \sum_{F \in \F} g_{F} \Big \|_{L^{q'}(\nu)}
\lesssim \|g \|_{L^{q'}(\nu)}.
\end{equation*}
\end{proof}

\section{Well localized operators}\label{Well localized operators}

We turn our attention to the main result of this paper and first recall the definition of a well localized operator from  \cite{NTV}. Suppose that we have a linear function $T^{\mu}$ mapping finite linear combinations of indicators $1_{Q}, Q \in \D$, into locally $\nu$-integrable functions. Also it is assumed that we have a linear $T^{\nu}$ mapping indicators $1_{Q}, Q \in \D$, into locally $\mu$-integrable functions so that
\begin{equation*}
\langle T^{\mu}1_{Q}, 1_{R}\rangle_{\nu}= \langle 1_{Q}, T^{\nu}1_{R}\rangle_{\mu}
\end{equation*}
for all $Q,R \in \D$. We call $T^{\nu}$ and $T^{\mu}$ formal adjoints of each other.

\begin{definition}\label{def well}
Fix some number $r \in \{0,1,2,\dots\}$. The operator $T^{\mu}$ is said to be \emph{lower triangularly localized} if
\begin{equation*}
\langle T^{\mu}1_{Q}, h^{\nu}_{R}\rangle_{\nu}=0
\end{equation*}
for all $Q,R \in \D$ such that 
\begin{itemize}
\item $l(R) \leq 2l(Q)$ and $R \not\subset Q^{(r+1)}$, or
\item $l(R) \leq 2^{-r}l(Q)$ and $R \not\subset Q$.
\end{itemize}
The operator $T^{\mu}$ is \emph{well localized} if both $T^{\mu}$ and $T^{\nu}$ are lower triangularly localized.  
\end{definition} 

\begin{remark}
This definition differs slightly from the definition given by Nazarov, Treil and Volberg in \cite{NTV}. The difference is that our condition ``$l(R) \leq 2l(Q)$ and $R \not\subset Q^{(r+1)}$'' above corresponds to ``$l(R) \leq l(Q)$ and $R \not\subset Q^{(r)}$'' in \cite{NTV}. This modification seems necessary to handle the sum $I$ in the proof of Theorem \ref{well loc} below. 
\end{remark}

A special example of a well localized operator is the two weight formulation of a Haar multiplier. Suppose we are on $\R$ for the moment and let $h_{I}:=|I|^{-1/2}(1_{I_{1}}-1_{I_{2}})$ be the $L^{2}$- normalized Haar function of a dyadic interval $I \in \D$. Let $\{\lambda_{I}\}_{I \in \D}$ be a set of real numbers such that only finitely many $\lambda_{I}$s are non-zero. Then consider the operator $T^{\mu}_{\lambda}$ defined for locally $\mu$-integrable functions by
\begin{equation*}
T^{\mu}_{\lambda}f:= \sum_{I \in \D}\lambda_{I} \langle f, h_{I} \rangle_{\mu}h_{I}.
\end{equation*}
We assumed finiteness of the coefficient sequence just to have the operator well defined in the general two weight setting, but of course it does not always have to be finite.

It is not difficult to see that in this case $T^{\mu}_{\lambda}$ is a well localized operator with parameter $r=0$. In a similar way one could see that dyadic shifts in $\R^{n}$ (of which the Haar multiplier is an example) in this two weight formulation would be well localized operators. For a discussion about where the motivation for the definition of a well localized operator comes from we refer to \cite{NTV}. There it is also shown that, more generally than just for dyadic shifts, two weight inequalities for the so called ``band operators'' can be seen as two weight inequalities for well localized operators.

The following main theorem characterizes the boundedness of well localized operators:

\begin{theorem}\label{well loc}
Let $1 < p,q < \infty$ be two exponents and suppose $T^{\mu}$ is a well localized operator with a formal adjoint $T^{\nu}$. The mapping $T^{\mu}$ extends to a bounded operator $T^{\mu}: L^{p}(\mu) \to L^{q}(\nu)$ if and only if there exist constants $C_{1},C_{2}>0$ such that for every finite subcollection $\D_{0} \subset \D$ and every set of non-negative real numbers $\{a_{Q}\}_{Q \in \D_{0}}$
the inequalities 

\begin{equation}\label{direct}
\Big\| \Big( \sum_{Q \in \D_{0}} ( 1_{R(Q)} T^{\mu}a_{Q}1_{Q})^{2} \Big)^{\frac{1}{2}} \Big \|_{L^{q}(\nu)} 
\leq
C_{1} \Big\| \Big( \sum_{Q \in \D_{0}} (a_{Q}1_{Q})^{2} \Big)^{\frac{1}{2}} \Big\|_{L^{p}(\mu)}
\end{equation}
and 
\begin{equation}\label{dual}
\Big\| \Big( \sum_{Q \in \D_{0}} ( 1_{R(Q)} T^{\nu}a_{Q}1_{Q})^{2} \Big)^{\frac{1}{2}}\Big\|_{L^{p'}(\mu)} 
\leq
C_{2} \Big\| \Big( \sum_{Q \in \D_{0}} (a_{Q}1_{Q})^{2} \Big)^{\frac{1}{2}} \Big\|_{L^{q'}(\nu)}
\end{equation}
hold. Here $R(Q) \in \D_{0}$ is any cube of  size $l(R(Q))=l(Q)$.

Furthermore in the case $T^{\mu}$ is bounded, its norm satisfies the estimate 
\begin{equation*}
\|T^{\mu}\|_{L^{p}(\mu) \to L^{q}(\nu)} \linebreak \simeq \mathcal{T}+\mathcal{T^{*}},
\end{equation*} 
where $\mathcal{T}$ and $\mathcal{T^{*}}$ denote the best constants in (\ref{direct}) and (\ref{dual}), respectively.

\end{theorem}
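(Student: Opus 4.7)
The proof has two directions. For \emph{necessity}, one applies Theorem \ref{vector extension}: boundedness of $T^{\mu}:L^{p}(\mu)\to L^{q}(\nu)$ implies boundedness of the vector-valued extension $\tilde{T}^{\mu}:L^{p}(\mu,l^{2})\to L^{q}(\nu,l^{2})$; feeding in the sequence $(a_{Q}1_{Q})_{Q\in \D_{0}}$ and using the pointwise bound $|1_{R(Q)} T^{\mu}(a_{Q}1_{Q})|\le |T^{\mu}(a_{Q}1_{Q})|$ gives (\ref{direct}) with $C_{1} \lesssim_{p,q} \|T^{\mu}\|$. The inequality (\ref{dual}) follows symmetrically from the boundedness of the formal adjoint $T^{\nu}: L^{q'}(\nu)\to L^{p'}(\mu)$.

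For \emph{sufficiency}, fix finite Haar expansions $f$ and $g$ indexed by a finite collection $\D_{0}\subset\D$; it is enough to prove $|\langle T^{\mu} f,g\rangle_{\nu}|\lesssim (\mathcal{T}+\mathcal{T}^{*})\|f\|_{L^{p}(\mu)}\|g\|_{L^{q'}(\nu)}$. Expanding $\langle T^{\mu} f,g\rangle_{\nu}=\sum_{Q,R\in\D_{0}}\langle T^{\mu}\de^{\mu}_{Q}f,\de^{\nu}_{R}g\rangle_{\nu}$, the well-localization of $T^{\mu}$ and $T^{\nu}$ forces the sum to collapse to two symmetric parts: pairs with $l(R)\le l(Q)$ and $R\subset Q^{(r+1)}$, and their mirror with $Q$ and $R$ exchanged (and $T^{\mu}$ replaced by $T^{\nu}$ through the duality $\langle T^{\mu} f,g\rangle_{\nu}=\langle f,T^{\nu} g\rangle_{\mu}$). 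By symmetry it suffices to handle the first part.

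That first part splits further. The \emph{nearby} contribution consists of pairs $(Q,R)$ with $2^{-r-1}l(Q)< l(R)\le 2l(Q)$ and $R\subset Q^{(r+1)}$: for each fixed scale and position shift among the $O_{r}(1)$ possibilities, each $Q$ pairs with a single $R=R(Q)$, so one can invoke (\ref{direct}) directly with $a_{Q}=|\langle f,h^{\mu}_{Q}\rangle_{\mu}|\mu(Q)^{-1/2}$, and use Kahane--Khinchine (\ref{KK}) together with the martingale square-function comparison (\ref{mart. norm}) to bring the resulting square-function sums back to $\|f\|_{L^{p}(\mu)}$ and $\|g\|_{L^{q'}(\nu)}$. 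The \emph{deep-nested} contribution, where $R\subsetneq Q$ with $l(R)\le 2^{-r-1}l(Q)$, is a paraproduct: on each such $R$ the function $\de^{\mu}_{Q} f$ is constant (equal to its value on the unique child of $Q$ containing $R$), so after resummation in $R$ one obtains an expression of the form $\sum_{Q'}\langle f\rangle^{\mu}_{Q'}\int_{Q'} T^{\nu}(\psi^{g}_{Q'})\, \ud \mu$, where $\psi^{g}_{Q'}$ encodes the martingale blocks of $g$ living inside $Q'$. Here the principal-cube/stopping-time argument used in the proof of Theorem \ref{Pos oper} -- combined with (\ref{direct}) applied with $a_{Q}=\langle|f|\rangle^{\mu}_{Q}$ and $R(Q)$ taken to be $Q$ itself -- reduces the bound to two Carleson embedding applications (Theorem \ref{Carleson}), one against $f$ and one against $g$.

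The main obstacle is exactly this paraproduct piece. In the $p=2$ NTV argument, orthogonality turns single-cube Sawyer-type testing into an $l^{2}$-sum for free, but for $p\ne 2$ the comparison (\ref{mart. norm}) is only one-sided, and Nazarov's counterexample \cite{F} shows Sawyer testing is genuinely insufficient. The square-function hypothesis (\ref{direct})--(\ref{dual}) is designed precisely to supply the missing $l^{2}$-control: by inserting random signs and applying Kahane--Khinchine on both sides of each use of the testing, one converts the implicit $l^{2}$-geometry of the paraproduct into the square function on the left of (\ref{direct}), which is then dominated by the square function in $f$ on the right. Once this conversion is set up, the nearby part, the symmetric mirror part, and the final Carleson/stopping reductions follow \cite{NTV} with only notational changes.
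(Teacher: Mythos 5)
Your outline of the proof matches the paper's overall strategy: necessity via Theorem~\ref{vector extension}, sufficiency via martingale expansion and the split by relative scales into a ``nearby'' diagonal block and a ``deep'' paraproduct block. But your treatment of the paraproduct piece --- which you yourself identify as the main obstacle --- contains a genuine error and is missing the key technical device that makes the paper's argument work.

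First, the wrong step: you claim the paraproduct reduces to ``two Carleson embedding applications, one against $f$ and one against $g$.'' Carleson embedding (Theorem~\ref{Carleson}) works for \emph{positive averages} $\langle|f|\rangle^{\mu}_{Q}1_{Q}$, and in the paraproduct the $f$-side indeed produces such averages at the stopping cubes. The $g$-side, however, produces the projections $P^{\nu}_{R,r}\tilde g=\sum_{R':R'^{(r)}=R}\de^{\nu}_{R'}\tilde g$, i.e.\ \emph{martingale differences}, not averages; these cannot be controlled by the Carleson embedding theorem. In the paper this factor is bounded by inserting random signs, applying Kahane--Khinchine (\ref{KK}), and then using the two-sided martingale square-function equivalence (\ref{mart. norm}) to see that $\sum_{F\in\F}\varepsilon_{F}\sum_{\pi_{\F}R=F}c_{R}P^{\nu}_{R,r}\tilde g$ is a martingale transform of $\tilde g$ with bounded coefficients. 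This is qualitatively different from a Carleson bound and is precisely where the $p\neq 2$ difficulty sits.

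Second, the missing device: to apply the square function testing (\ref{direct}) to the paraproduct you must reduce the sum $\sum_{R}\langle\tilde f\rangle^{\mu}_{R}\langle T^{\mu}1_{R},P^{\nu}_{R,r}\tilde g\rangle_{\nu}$ to one indexed by the stopping cubes $F\in\F$. This requires \emph{two} observations, neither of which appears in your sketch: (a) since $T^{\mu}$ is well localized, $\langle T^{\mu}1_{R},P^{\nu}_{R,r}\tilde g\rangle_{\nu}=\langle T^{\mu}1_{F},P^{\nu}_{R,r}\tilde g\rangle_{\nu}$ for $F=\pi_{\F}R\supset R$, which is what lets the family $\{T^{\mu}1_{F}\}_{F\in\F}$ enter; and (b) the LSSU-T trick of choosing $c_{R}\in[-2,2]$ with $c_{R}\langle\tilde f\rangle^{\mu}_{R}=\langle|\tilde f|\rangle^{\mu}_{F}$, which freezes the $f$-average at the stopping level so that, after Cauchy--Schwarz, the left factor is exactly the left-hand side of (\ref{direct}) applied to $\{a_{F}=\langle|\tilde f|\rangle^{\mu}_{F}\}_{F\in\F}$. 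Your phrase ``(\ref{direct}) applied with $a_{Q}=\langle|f|\rangle^{\mu}_{Q}$ and $R(Q)=Q$'' suggests testing at every cube $Q$, which would not close the estimate; the testing has to run over the sparse stopping family $\F$ only, after (a) and (b).

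Smaller points: when $l(R)\le l(Q)$, well-localization applied to the children of $Q$ gives $R\subset Q^{(r)}$, not $Q^{(r+1)}$; and the top-level terms $\langle f\rangle^{\mu}_{P_{i}}1_{P_{i}}$, $\langle g\rangle^{\nu}_{P_{j}}1_{P_{j}}$ cannot simply be dropped by ``taking $f,g$ to be finite Haar expansions'' --- the paper retains them and bounds them by the (Sawyer-type instance of the) testing conditions.
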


\begin{remark}
The testing conditions (\ref{direct}) and (\ref{dual}) will be applied in the situation where the cubes $R(Q)\in \D_{0}$ have side length $2l(Q)$. This is possible since every dyadic cube can be covered with its $2^{n}$ children.

The cubes $R(Q)$ appearing in the testing conditions can actually be assumed in a sense to be close to the cube $Q$. To be precise, when we use the full square function testing conditions (\ref{direct}) and (\ref{dual}), the cubes $R(Q)$ satisfy $l(R(Q))= l(Q)$ and $R(Q) \subset Q^{(r+1)}$, where $r$ is the parameter from the definition of the well localized operator.
 
 Also reduced versions of equations (\ref{direct}) and (\ref{dual}) where there is only one term in the sums, that is the Sawyer type testing, will be used. If the underlying dyadic system has an increasing sequence $R_{0}\subset R_{1}\subset \dots$ of cubes so that $\R^{n}=\cup_{k=0}^{\infty}R_{k}$, then Sawyer type testing will be used only with $Q=R(Q)$. 

But if the dyadic system does not have an increasing sequence of cubes covering the whole space, then the Sawyer type testing will be used when the cubes $Q$ and $R(Q)$ have equal side length $l(Q)=l(R(Q))$ and are adjacent in the sense that $\overline{Q} \cap \overline{R(Q)} \not=\emptyset$.  
\end{remark}

\begin{remark}
We explain here the consequence of F. Nazarov's counterexample mentioned in the introduction. Namely, for any exponent $1<p<\infty, \ p\not=2,$ the Sawyer type testing for Haar multipliers fails in the following quantitative sense: There \emph{does not} exist a universal constant $C$ such that for an arbitrary Haar multiplier $T^{\mu}$, with a formal adjoint $T^{\nu}$, the testing conditions
\begin{equation*}
\Big \|T^{\mu}1_{Q}\Big\|_{L^{p}(\nu)} \leq \mathcal{T} \mu(Q)^{\frac{1}{p}} \ \ (\text{for all } Q \in \D)
\end{equation*}
and
\begin{equation*}
\Big \| T^{\nu}1_{Q}\Big\|_{L^{p'}(\mu)} \leq \mathcal{T}^{*} \nu(Q)^{\frac{1}{p'}} \ \ (\text{for all } Q \in \D)
\end{equation*}
would imply that the operator $T^{\mu}$ could be extended to a bounded operator $T^{\mu}:L^{p}(\mu) \to L^{p}(\nu)$ with norm at most $C(\mathcal{T} + \mathcal{T}^{*})$. In other words, Theorem \ref{well loc} fails for general exponents $p$ if the square function testing is reduced to Sawyer type testing.
\end{remark}

\begin{proof}[Proof of Theorem \ref{well loc}]
That (\ref{direct}) and (\ref{dual}) hold if $T^{\mu}$ is bounded follows again from Theorem \ref{vector extension}, since the quantities on the left hand side of (\ref{direct}) and (\ref{dual}) can be made bigger by omitting the indicators $1_{R(Q)}$. Hence, we only need to prove sufficiency, which we show next.

We fix two compactly supported and bounded functions  $f \in L^{p}(\mu)$ and $g \in L^{q'}(\nu)$. There are at most $2^{n}$ big cubes $P_{1}, \dots, P_{2^{n}} \in \D$ that cover the supports of $f$ and $g$. Perform the martingale decompositions
\begin{equation*}
f= \sum_{i=1}^{2^{n}} \langle f \rangle^{\mu}_{P_{i}}1_{P_{i}} + \sum_{\begin{substack}{Q \in \D: \\ Q \subset \cup_{i=1}^{2^{n}} P_{i}}\end{substack}} \de^{\mu}_{Q} f
\end{equation*}
and
\begin{equation*}
g= \sum_{i=1}^{2^{n}} \langle g \rangle^{\nu}_{P_{i}}1_{P_{i}} + \sum_{\begin{substack}{Q \in \D: \\ Q \subset \cup_{i=1}^{2^{n}} P_{i}}\end{substack}} \de^{\nu}_{Q} g.
\end{equation*}
We may furthermore assume that there are only finitely many terms in the decompositions of $f$ and $g$, since these kind of functions are dense in $L^{p}$. Accordingly every sum below is finite, so there are no convergence problems. Also, for clear notation, define the functions 
$$ 
\tilde{f}:=f -\sum_{i=1}^{2^{n}}\langle f\rangle^{\mu}_{P_{i}}1_{P_{i}}, \ \ \ \tilde{g}:=g -\sum_{i=1}^{2^{n}}\langle g \rangle^{\nu}_{P_{i}}1_{P_{i}}.
$$

We consider the pairing $\langle T^{\mu}f, g \rangle_{\nu}$ and use the martingale difference decompositions to write it as
\begin{equation}\label{initial split}
\begin{split}
&\sum_{i=1}^{2^{n}} \Big\langle  \langle f\rangle^{\mu}_{P_{i}}T^{\mu}1_{P_{i}}, g \Big\rangle_{\nu} 
+\sum_{j=1}^{2^{n}} \Big\langle   \tilde{f},   \langle g\rangle^{\nu}_{P_{j}}T^{\nu}1_{P_{j}} \Big\rangle_{\mu} \\
&+ \sum_{Q,R \in \D} \Big\langle T^{\mu} \de^{\mu}_{Q} \tilde{f} ,  \de^{\nu}_{R} \tilde{g} \Big\rangle_{\nu}.
\end{split}
\end{equation}
Note that $\de^{\mu}_{Q} \tilde{f}$ can be non-zero only if $Q \subset \cup_{i=1}^{2^{n}} P_{i}$, and similarly with $\tilde{g}$.

As a direct application of the testing conditions (\ref{direct}) and (\ref{dual}) each term in the first two sums in (\ref{initial split}) is bounded by a testing constant times the norms of $f$ and $g$, and actually we need here only the Sawyer type testing. Since there are finitely many (depending on the dimension $n$) terms in those sums we see that they are bounded as we want. 

Turn attention to the third sum in  (\ref{initial split}). It is further split according to the relative size of the cubes $Q$ and $R$ as
\begin{equation*}
\sum_{\begin{substack}{Q,R \in \D: \\ l(R) \leq l(Q)}\end{substack}}\langle T^{\mu} \de^{\mu}_{Q} \tilde{f} , \ \de^{\nu}_{R} \tilde{g} \rangle_{\nu}
+\sum_{\begin{substack}{Q,R \in \D: \\ l(R) > l(Q)}\end{substack}} \langle  \de^{\mu}_{Q} \tilde{f} , \ T^{\nu}\de^{\nu}_{R} \tilde{g} \rangle_{\mu},
\end{equation*}
and by symmetry we concentrate on the first half.

So suppose we have two cubes $Q, R \in \D$ with $l(R) \leq l(Q)$. For $\langle T^{\mu} \de^{\mu}_{Q} \tilde{f} , \ \de^{\nu}_{R} \tilde{g} \rangle_{\nu}$ to be non-zero it must first of all be, because $T^\mu$ is well localized, that $R \subset Q^{(r)}$. Also, if $l(R) < 2^{-r}l(Q)$, then $R \subset Q$. Hence the summation we are considering can be split as
\begin{equation*}
\begin{split}
\sum_{ \begin{substack}{Q,R \in \D: \\ R \subset Q^{( r )}, \\ l(Q)\geq l( R ) \geq 2^{-r}l(Q)}\end{substack}}
\langle T^{\mu} \de^{\mu}_{Q} \tilde{f} , \ \de^{\nu}_{R} \tilde{g} \rangle_{\nu} 
+\sum_{ \begin{substack}{Q,R \in \D: \\ R \subset Q, \\ l(R) < 2^{-r}l(Q)}\end{substack}}
\langle T^{\mu} \de^{\mu}_{Q} \tilde{f} , \ \de^{\nu}_{R} \tilde{g} \rangle_{\nu} 
=: I+II.
\end{split}
\end{equation*}

\subsection*{Estimation of $I$}
For each $Q\in \D$ there are at most $N(r)$ cubes  $R$  such that $R \subset Q^{(r)}$ and $2^{-r}l(Q) \leq l(R) \leq l(Q)$.  Hence the sum $I$ may be divided into $N(r)$ different sums of the form
\begin{equation}\label{only one R}
\sum_{Q \in \D} \langle T^{\mu} \de^{\mu}_{Q} \tilde{f} , \ \de^{\nu}_{R(Q)} \tilde{g} \rangle_{\nu},
\end{equation}
where $R(Q) \in \D$ is a cube such that $R \subset Q^{(r)}$ and $2^{-r}l(Q) \leq l(R) \leq l(Q)$.
 
Let $Q^1,\dots,Q^{2^n}$ denote the children $ch(Q)$ of $Q$. Then concerning \eqref{only one R} we have

\begin{equation*}
\begin{split}
&\Big| \sum_{Q \in \D} \langle T^{\mu} \de^{\mu}_{Q} \tilde{f} , \ \de^{\nu}_{R(Q)} \tilde{g} \rangle_{\nu} \Big| \\
&\leq \Big\| \Big( \sum_{Q \in \D} ( 1_{R(Q)} | T^{\mu}  \de^{\mu}_{Q} \tilde{f} |^{2} \Big)^{\frac{1}{2}} \Big \|_{L^{q}(\nu)}
\Big\| \Big( \sum_{Q \in \D} (  |   \de^{\nu}_{R(Q)} \tilde{g} |^{2} \Big)^{\frac{1}{2}} \Big \|_{L^{q'}(\nu)} \\
& \lesssim \sum_{k=1}^{2^{n}} \Big\| \Big( \sum_{Q \in \D} ( 1_{R(Q)} | T^{\mu}  \langle \de^{\mu}_{Q} \tilde{f}\rangle^{\mu}_{Q^{k}} 1_{Q^{k}} |^{2} \Big)^{\frac{1}{2}} \Big \|_{L^{q}(\nu)} \| \tilde{g} \|_{L^{q'}(\nu)} \\
& \lesssim C_{1} \sum_{k=1}^{2^{n}} \Big\| \Big( \sum_{Q \in \D}   |   \langle \de^{\mu}_{Q} \tilde{f}\rangle^{\mu}_{Q^{k}} 1_{Q^{k}} |^{2} \Big)^{\frac{1}{2}} \Big \|_{L^{p}(\mu)} \| \tilde{g} \|_{L^{q'}(\nu)} \lesssim C_{1} \| \tilde{f} \|_{L^{p}(\mu)} \| \tilde{g} \|_{L^{q'}(\nu)} \\
&\lesssim C_1\|f \|_{L^{p}(\mu)} \|g \|_{L^{q'}(\nu)}.
\end{split}
\end{equation*}  

\subsection*{Estimation of $II$} Suppose we have two cubes $Q,R \in \D$ such that $R \subset Q$ and $l(R)<2^{-r}l(Q)$. Denote by $Q_R$ the child $Q' \in ch(Q)$ that contains $R$. Then, because $T^\mu$ is well localized, it holds that
$$
\langle T^{\mu} \de^{\mu}_{Q} \tilde{f} , \ \de^{\nu}_{R} \tilde{g} \rangle_{\nu}
= \langle \de^{\mu}_{Q}\tilde{f} \rangle_{Q_R}^{\mu} \langle T^{\mu} 1_{R^{(r)}} , \ \de^{\nu}_{R} \tilde{g} \rangle_{\nu}.
$$
Hence
\begin{equation}\label{to para}
\begin{split}
\sum_{ \begin{substack}{Q,R \in \D: \\ R \subset Q, \\ l(R) < 2^{-r}l(Q)}\end{substack}}
\langle T^{\mu} \de^{\mu}_{Q} \tilde{f} , \ \de^{\nu}_{R} \tilde{g} \rangle_{\nu}
&=\sum_{R \in \D} \sum_{\begin{substack}{Q \in \D: Q \supsetneq R^{(r)}}\end{substack}} \langle \de^{\mu}_{Q}\tilde{f} \rangle_{Q_R}^{\mu} \langle T^{\mu} 1_{R^{(r)}} , \ \de^{\nu}_{R} \tilde{g} \rangle_{\nu} \\
&=\sum_{R \in \D} \langle \tilde{f} \rangle^\mu_{R^{(r)}} \langle T^{\mu} 1_{R^{(r)}} , \ \de^{\nu}_{R} \tilde{g} \rangle_{\nu} \\
& = \sum_{R \in \mathscr{D}} \langle \tilde{f} \rangle^\mu_{R} \langle T^{\mu} 1_{R} , \ P^\nu_{R,r} \tilde{g} \rangle_{\nu},
\end{split}
\end{equation}
where
$$
P^\nu_{R,r}\tilde{g}:=\sum_{\begin{substack}{R' \in \mathscr{D}: \\ R'^{(r)}=R}\end{substack}} \de^{\nu}_{R'}\tilde{g}.
$$

Let $\mathscr{D}_0:= \{Q \in \mathscr{D}: Q \subset \bigcup_{i=1}^{2^n}P_i\}$. Construct the principal cubes $\mathscr{F}$ for the function $\tilde{f}$ in the collection $\mathscr{D}_0$ as in Subsection \ref{Principal and Carleson}. Note that if $\langle \tilde{f} \rangle^{\mu}_{Q}\not=0$ for some $Q \in \mathscr{D}$, then $Q \in \mathscr{D}_0$.

Suppose $F \in \mathscr{F}$ and $Q \in \mathscr{D}_0$ are such that $\pi_\F Q =F$. Then there exists a constant $c_Q \in [-2,2]$ so that $c_Q\langle \tilde{f}\rangle^{\mu}_Q = \langle |\tilde{f}|\rangle^\mu_F$. Note also, that if $Q,R \in \mathscr{D}$ are two cubes such that $Q \supset R$, then
$$
\langle T^{\mu} 1_{R} , \ P^\nu_{R,r} \tilde{g} \rangle_{\nu}
=\langle T^{\mu} 1_{Q} , \ P^\nu_{R,r} \tilde{g} \rangle_{\nu}
$$
because $T^\mu$ is well localized. 

Hence, continuing from \eqref{to para}, we have
\begin{equation*}
\begin{split}
\Big|\sum_{R \in \mathscr{D}}& \langle \tilde{f} \rangle^\mu_{R} \langle T^{\mu} 1_{R} ,  P^\nu_{R,r} \tilde{g} \rangle_{\nu} \Big|
=\Big|\sum_{F \in \F}  \langle |\tilde{f}| \rangle^\mu_{F}\sum_{\begin{substack}{R \in \mathscr{D}_0: \\ \pi_\F R=F}\end{substack}}  \langle  1_F T^{\mu} 1_{F} , c_R P^\nu_{R,r} \tilde{g} \rangle_{\nu} \Big| \\
&\leq \Big\| \Big( \sum_{F \in \F} |\langle |\tilde{f}| \rangle^\mu_{F}T^{\mu}1_F|^2  \Big)^\frac{1}{2}\Big\|_{L^q(\nu)}
\Big\| \Big( \sum_{F \in \F} |\sum_{\begin{substack}{R \in \mathscr{D}_0: \\ \pi_\F R=F}\end{substack}}  c_R P^\nu_{R,r} \tilde{g}|^2  \Big)^\frac{1}{2}\Big\|_{L^{q'}(\nu)}.
\end{split}
\end{equation*}
We note here that this trick of introducing the constants $c_Q$ is from \cite{LSSU-T}. 

From the testing condition \eqref{direct} and the Carleson embedding theorem \ref{Carleson} it follows that
\begin{equation*}
\begin{split}
\Big\| \Big( \sum_{F \in \F} |\langle |\tilde{f}| \rangle^\mu_{F}T^{\mu}1_F|^2  \Big)^\frac{1}{2}\Big\|_{L^q(\nu)}
&\leq C_1\Big\| \Big( \sum_{F \in \F} (\langle |\tilde{f}| \rangle^\mu_{F})^21_F  \Big)^\frac{1}{2}\Big\|_{L^p(\mu)} \\
&\leq C_1 \Big\| \sum_{F \in \F} \langle |\tilde{f}| \rangle^\mu_{F}1_F\Big\|_{L^p(\mu)}  \\
&\lesssim C_1 \|\tilde{f}\|_{L^p(\mu)} 
\lesssim C_1 \|f\|_{L^p(\mu)}.
\end{split}
\end{equation*}
With a similar computation as in the proof of Theorem \ref{vector extension}, the Kahane-Khinchine inequality \eqref{KK} and Equation \eqref{mart. norm} for computing $L^p$-norms with martingale differences give
\begin{equation*}
\begin{split}
\Big\| \Big( \sum_{F \in \F} |\sum_{\begin{substack}{R \in \mathscr{D}_0: \\ \pi_\F R=F}\end{substack}}  c_R P^\nu_{R,r} \tilde{g}|^2  \Big)^\frac{1}{2}\Big\|_{L^{q'}(\nu)}
&\simeq \E \Big\|  \sum_{F \in \F} \varepsilon_F \sum_{\begin{substack}{R \in \mathscr{D}_0: \\ \pi_\F R=F}\end{substack}}  c_R P^\nu_{R,r} \tilde{g} \Big\|_{L^{q'}(\nu)} \\
&\lesssim \|\tilde{g}\|_{L^{q'}(\nu)} \lesssim \|g\|_{L^{q'}(\nu)}.
\end{split}
\end{equation*}
This concludes the estimation of the sum $II$, and hence also the proof of Theorem \ref{well loc}.

\begin{remark}
Let $\mathscr{D}_0 \subset \D$ be any finite subcollection. Define a paraproduct operator by
$$
\Pi^{\mu,\nu}_{T,\D_{0}}f:=\sum_{Q \in \D_{0}} \langle f \rangle_{Q}^{\mu} \sum_{R \in ch^{(r)} Q} \de^{\nu}_{R} T^{\mu}1_{Q}.
$$
From the testing condition \eqref{direct} it follows, with similar arguments as in the estimation of $II$, that 
$$
\|\Pi^{\mu,\nu}_{T,\D_{0}}\|_{L^p(\mu) \to L^q(\nu)} \lesssim C_1,
$$ 
where the implied constant does not depend on the collection $\D_0$. Conversely, by slightly modifying the proof of Theorem \ref{well loc}, the part that corresponds to the estimation of the sum $II$ follows from this fact. See \cite{NTV} where this was done in the case $p=q=2$.
\end{remark}

\end{proof}

\end{document}